\newcommand{\abs}[1]{\lvert #1 \rvert}
\newcommand{\Z}{\mathbb Z}
\DeclareMathOperator{\SR}{SR}
\DeclareMathOperator{\sr}{br}
\DeclareMathOperator{\Sidon}{F_2}
\DeclareMathOperator{\Bh}{F_h}
\DeclareMathOperator{\BR}{BR}
\def\hlinewd#1{%
\noalign{\ifnum0=`}\fi\hrule \@height #1 \futurelet
\reserved@a\@xhline}
\newtheorem{theorem}{Theorem}
\newtheorem{lemma}[theorem]{Lemma}
\title{Sidon-Ramsey and $B_{h}$-Ramsey numbers}
\author[M. A. Espinosa-García]{Manuel A. Espinosa-García}
\address[M. A. Espinosa-García]{Centro de Ciencias Matemáticas, UNAM Campus Morelia, Morelia, Mexico}
\email{esgama@matmor.unam.mx}
\author{Amanda Montejano}
\address[A. Montejano]{Facultad de Ciencias, UNAM campus Juriquilla, Querétaro, Mexico}
\email{amandamontejano@ciencias.unam.mx}
\author[E. Roldán-Pensado]{Edgardo Roldán-Pensado}
\address[E. Roldán-Pensado]{Centro de Ciencias Matemáticas, UNAM Campus Morelia, Morelia, Mexico}
\email{e.roldan@im.unam.mx}
\author[J. D. Suárez]{J. David Suárez}
\address[J. D. Suárez]{Facultad de Ciencias, UNAM campus Juriquilla, Querétaro, Mexico}
\email{suar\_david@hotmail.com}
\thanks{This research was supported by CONACyT project 282280 and PAPIIT project IG100822.}
\begin{document}

\begin{abstract}
	For a given positive integer $k$, the Sidon-Ramsey number $\SR(k)$ is defined as the minimum value of $n$ such that, in every partition of the set $[1, n]$ into $k$ parts, there exists a part that contains two distinct pairs of numbers with the same sum. In other words, there is a part that is not  a Sidon set. In this paper, we investigate the asymptotic behavior of this parameter and two generalizations of it. The first generalization involves replacing pairs of numbers with $h$-tuples, such that in every partition of $[1, n]$ into $k$ parts, there exists a part that contains two distinct $h$-tuples with the same sum. Alternatively, there is a part that is not a $B_h$ set. The second generalization considers the scenario where the interval $[1, n]$ is substituted with a non-necessarily symmetric $d$-dimensional box of the form $\prod_{i=1}^d[1,n_i]$.  For the general case of $h\geq 3$ and non-symmetric boxes, before applying our method to obtain the Ramsey-type result, we needed to establish an upper bound for the corresponding density parameter.
\end{abstract}

\maketitle

\section{Introduction}

A subset $S$ of an additive group $G$ is called a \emph{Sidon set} if the sums of any two elements (possibly equal) of $S$ are distinct. 
In other words, if $x,y,z,w\in S$ satisfy
\[x+y=z+w,\]
then $\{x,y\}=\{z,w\}$, which means that the equation above has only trivial solutions in $S$.
For a given $X\subset G$, an important problem is to determine the maximum size of a Sidon set contained in $X$. This problem has been mainly studied when $G=\Z$ and $X=[1,n]$. We use $\Sidon(n)$ to denote the size of the largest Sidon set contained in $[1,n]$. It is known that
\begin{equation}\label{eq:bestS}
	n^{1/2}(1-o(1))\leq \Sidon(n)\leq n^{1/2}+0.998\,n^{1/4}.
\end{equation}
The upper bound has been progressively improved \cite{ET41,Lin69,Cil10}, the best being recently established by Balogh, Füredi and Roy \cite{BFR23}. The lower bound may be inferred from several known constructions; in particular the one provided by Singer concerning maximal Sidon sets in $X=G=\Z_n$, where $n=q^2+q+1$ and $q$ is a prime power \cite{Sin38}. For more information about problems related with Sidon sets the reader may consult the survey paper of O'Bryant \cite{OBr04}.

As with many density theorems, there is a Ramsey version of the problem of maximizing the size of a Sidon set. For a given positive integer $k$, a \emph{Sidon $k$-partition} of $X\subset G$ is a partition of $X$ into $k$ parts, all of which are Sidon sets. Let $\SR(k)$ be the \emph{Sidon-Ramsey number}, defined as the minimum $n$ such that there is no Sidon $k$-partition of $[1,n]$. This parameter can be found in different contexts under different names. For instance, the existence of the Sidon-Ramsey numbers is a consequence of a theorem of Rado \cite{Rad43} with the matrix
\[\begin{pmatrix}
	1&1&-1&-1&0&0\\
	1&0&-1&0&1&0\\
	1&0&0&-1&0&1
\end{pmatrix},\]
where two new variables and two new equations are introduced to assure that no trivial solutions to the equation $x+y=z+w$ are considered. Liang et al. \cite{LLXX13} and Xu et al. \cite{XLL18}, using computer assistance, found the exact values of $\SR(k)$ for $k\le 5$ and gave specific bounds for $k\le 19$. They use the fact that $\SR(k)$ can be bounded from above using the pigeonhole principle, that is, 

\begin{equation}\label{eq:chinos}
	\SR(k)\leq (t-1)k+1 \text{ for any } k,t \ge 2 \text{ satisfying } (\Sidon(t)-1)/(t-1)>k.
\end{equation}

By computing the upper bound derived from \eqref{eq:chinos} and \eqref{eq:bestS}, along with the well-known Singer construction of Sidon sets, it can be easily deduced that the Sidon-Ramsey number $\SR(k)$ behave asymptotically as $k^{2}$ (Theorem \ref{thm:main}).  Due to the interesting connection between Sidon sets and $C_4$-free graphs, the proof of the lower bound of Theorem \ref{thm:main}  closely resembles the proof presented in \cite{CG75} for Theorem 3. However,  we have included this proof in Section \ref{sec:2} to ensure completeness.

\begin{theorem}\label{thm:main} 
	Let $k$ be a positive integer, then
	\[k^2 - O(k^{c})\le \SR(k)\le k^2 + C k^{3/2} + O(k),\]
	where $c\le 1.525$ depends on the distribution of the prime numbers and $C$ can be taken close to $1.996$ and depends on the best upper bound for Sidon numbers.
\end{theorem}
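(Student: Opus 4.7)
The plan is to establish the two bounds separately by classical arguments: a pigeonhole-style counting for the upper bound, and an explicit Singer-type construction for the lower bound.

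For the upper bound, I would apply the pigeonhole principle encoded in \eqref{eq:chinos}: in any $k$-partition of $[1,n]$, some class has size at least $\lceil n/k\rceil$, and any Sidon class contained in $[1,n]$ has size at most $\Sidon(n)$. Feeding in the estimate $\Sidon(n)\le n^{1/2}+0.998\,n^{1/4}$ from \eqref{eq:bestS}, I would set $n=k^2+Ck^{3/2}+O(k)$ and determine the threshold $C$ via the inequality $n/k>\Sidon(n)$. Expanding $\sqrt{n}=k+\tfrac{C}{2}\sqrt{k}+O(1)$ and $n^{1/4}=\sqrt{k}+O(1)$ reduces the condition to $C>2\cdot 0.998$, which yields $\SR(k)\le k^2+1.996\,k^{3/2}+O(k)$.

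For the lower bound, the task is to exhibit a Sidon $k$-partition of an interval $[1,N]$ with $N$ just below $k^2$. I would invoke Singer's construction: for any prime power $q$, the group $\Z_{q^2+q+1}$ carries a perfect difference set $S$ of size $q+1$, which is in particular a Sidon set modulo $q^2+q+1$. In the projective plane of order $q$, the $q+1$ lines through a fixed point partition the remaining $q^2+q$ points into $q+1$ classes of size $q$; reading the plane cyclically via Singer and sending the fixed point to $0\in\Z_{q^2+q+1}$, these classes correspond to the translates $(S-s)\setminus\{0\}$ with $s\in S$. Each translate is Sidon in $\Z_{q^2+q+1}$, and since any integer identity $a+b=c+d$ implies the same identity modulo $n$, the translate remains Sidon in $\Z$ when viewed as a subset of $\{1,\dots,q^2+q\}$. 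Choosing $q$ to be the largest prime with $q+1\le k$, invoking the Baker--Harman--Pintz bound $q\ge k-1-O(k^{0.525})$ on prime gaps, and padding with empty classes, I obtain a Sidon $k$-partition of $[1,N]$ with $N=q^2+q\ge k^2-O(k^{1.525})$, hence $\SR(k)\ge k^2-O(k^{1.525})$.

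The main obstacle, and the source of the exponent $c=1.525$, is locating a prime power close to $k$: any sharpening of Baker--Harman--Pintz would propagate directly to $c$. The constant $1.996$ in the upper bound is in turn tied to the coefficient $0.998$ in \eqref{eq:bestS} and would shrink if that estimate were improved. The only other step requiring care is the transfer of the Sidon property from $\Z_n$ to $\Z$, which is essentially free since equality in $\Z$ implies equality modulo $n$.
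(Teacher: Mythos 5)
Your proposal is correct and follows essentially the same route as the paper: the pigeonhole inequality $n/k>\Sidon(n)$ combined with the bound $\Sidon(n)\le n^{1/2}+0.998\,n^{1/4}$ for the upper bound (you substitute the ansatz $n=k^2+Ck^{3/2}$ where the paper solves for $n$, but the computation is the same), and Singer difference sets in $\Z_{q^2+q+1}$ whose translates $S-s$ cover the group, together with the Baker--Harman--Pintz prime-gap bound, for the lower bound. The projective-plane phrasing of the translate partition and the reduction of the Sidon property from $\Z_n$ to $\Z$ are both handled correctly and match the paper's argument.
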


In this paper, we investigate analogous results  to Theorem \ref{thm:main} corresponding to some generalizations of $F_2(n)$. Specifically, we study the Ramsey-type parameter for $B_h$ sets in the interval $X=[1,n]\subset\Z$ (precise definitions will be provided in Section \ref{sec:2}). Additionally, we explore the problem in higher dimensions by studying the size of the largest $B_{h}$-set within a non-necessarily symmetric $d$-dimensional box  $X=\prod_{j=1}^{d}[1,n_{j}]\subset\Z^d$.

The paper is organized as follows.  In Section \ref{sec:2}  we deal with everything related to the one dimensional case. First, we define a $B_h$ set in $\Z$, its density parameter $\Bh(n)$, and its corresponding Ramsey-type parameter $\BR_h(k)$. If $h=2$ this coincides with the definition of a Sidon set, $\Sidon(n)$, and  $\SR(k)$. Then, we present the proof of Theorem \ref{thm:main} which states the asymptotic behavior of $\SR(k)$ (corresponding to the case $h=2$) separated from the proof of Theorem \ref{thm:bh},  which states the asymptotic behavior of $\BR_h(k)$ for  $h\geq 3$.

Section \ref{sec:3} is  devoted to the work in higher dimensions.  Given positive integers $n_{1}\le \dots \le n_{d}$, we use $\Bh(n_1,\dots, n_d)$ to denote the size of the largest $B_h$-set contained in a $d$-dimensional box $[1,n_{1}]\times \dots \times [1,n_{d}]$.  If  $n_1= \dots=n_d=n$,  we said that the box is symmetric. Previous research by Lindström \cite{Lin72}, Cilleruelo \cite{Cil10}, and Rackham and {\v{S}}arka \cite{RS10} has studied this parameter in specific scenarios: symmetric boxes with $h=2$, asymmetrical boxes with $h=2$, and symmetric boxes for $h\geq 2$, respectively. To complete this investigation, we employ techniques developed by Cilleruelo \cite{Cil10} and Rackham and {\v{S}}arka \cite{RS10} to establish an upper bound for $\Bh(n_1,\dots, n_d)$ in the most general case, where the box is not necessarily symmetric and $h\geq 3$ (refer to Theorem \ref{cajasfh}). For the Ramsey-type version, it is convenient  to define $\sr_h(n_1,\dots,n_d)$ to be the largest positive integer $k$ such that there is no $k$-partition of $\prod_{i\le d}[1,n_i]$ in which all its parts are $B_h$ sets. Note that, in dimension $d=1$, $\sr_h(n)$ is the counterpart of $\BR_h(k)$ in the sense that $\sr_h(\BR_h(k))=k$.

Tables 1 and 2 serve to identify the notation that we will use and locate the bounds of each parameter in the manuscript.

\begin{table}[ht]
\begin{center}
\begin{tabular}{|c||c|c|}
\hline
&$h=2$ & $h\geq 3$ \\
\hlinewd{1pt}
$\Bh(n)$ & \eqref{eq:bestS}&  \eqref{Fhcota}\\
\hline
$\BR_h(k)$ &  Theorem \ref{thm:main} &  Theorem \ref{thm:bh}\\
\hline
\end{tabular}
\end{center}
\caption{We use $\Bh(n)$ to denote the size of the largest $B_h$-set contained in $[1,n]$, and so $\Sidon(n)$ is the classical Sidon number.  The corresponding Ramsey-type parameter is denoted by $\BR_h(k)$, and so $\SR(k)=\BR_2(k)$.}
\label{table:d=1}
\end{table}

\begin{table}[ht]
\begin{tabular}{|c|c||c|c|}
\hline
& &$h=2$  & $h\ge 3$\\ 
\hlinewd{1pt}
$\Bh(n_1,\dots, n_d)$& $n_1= \dots=n_d=n$ &   \eqref{eq:Linboth}&  \eqref{eq:Bhbound}\\
\cline{2-4}
& $N=n_1n_2 \dots n_d$ & \eqref{eq:Sidonmot}  &   Theorem \ref{cajasfh}\\
\hlinewd{1pt}
$\sr_h(n_1,\dots, n_d)$& $n_1= \dots=n_d=n$ &    \eqref{Teo6-Sim-h=2} &  \eqref{teo6-sim} \\ 
\cline{2-4}
& $N=n_1n_2 \dots n_d$  &  \eqref{Teo6 final} &  Theorem \ref{cajasbrh}\\
\hline
\end{tabular}
\caption{We use $\Bh(n_1,\dots, n_d)$ to denote the size of the largest $B_h$-set contained in a $d$-dimensional box $[1,n_{1}]\times [1,n_{2}]\times \dots \times [1,n_{d}]$.  The corresponding Ramsey-type parameter is denoted by  $\sr_h(n_1,\dots, n_d)$. When $n_1=n_2= \dots=n_d=n$ we said that the box is symmetric. In other case, the results are expressed in terms of the product $N=n_1n_2 \dots n_d$.}
\label{table:d>2}
\end{table}

\section{\texorpdfstring{$B_h$}{Bh} and \texorpdfstring{$B_h$}{Bh}-Ramsey numbers in intervals}\label{sec:2}

A subset $S$ of an additive group $G$ is called a \emph{$B_h$-set} if all sums of the form $a_1+\dots+a_h$, where $a_1,\dots, a_h\in A$, are distinct; note that a $B_2$-set is a Sidon set. We use $\Bh(n)$ to denote the size of the largest $B_h$-set contained in $[1,n]$. It is known that 
\begin{equation}\label{Fhcota}
	(1+o(1))n^{1/h}\le \Bh(n)\le c_hn^{1/h}.
\end{equation}
The lower bound was proved by Bose and Chowla \cite{BC62}, while the constant $c_h$ in the upper bound has been successively improved \cite{DR84,Jia93,Che94,Cil01}. Currently, the best bounds are due to Green \cite{Gre01}, who proved that $c_3<1.519$, $c_4<1.627$ and $c_h\le \frac 1{2e}(h+(\frac{3}{2}+o(1))ln(h))$.

As with Sidon sets, we can define the Ramsey version of this problem. For a given positive integer $k$ and a subset $X$ of an additive group $G$, a \emph{$B_h$ $k$-partition} of $X$ is a partition of $X$ into $k$ parts, all of which are $B_h$-sets. Let $\BR_h(k)$ be the \emph{$B_h$-Ramsey number}, defined as the minimum $n$ such that there is no $B_h$ $k$-partition of $[1,n]$.
We shall note that $\BR_2(k)=\SR(k)$.

\begin{proof}[Proof of Theorem \ref{thm:main}]
	From the upper bound in \eqref{eq:bestS} and using the pigeonhole principle we deduce that the interval $[1,n]$ cannot be partitioned into less than 
	\[k\ge\frac{n}{\Sidon(n)}\ge \frac{n}{n^{1/2}+0.998\,n^{1/4}}\]
	Sidon sets. This can be simplified, using the geometric series, to
	\begin{align*}
		k &\ge \frac{n^{1/2}}{1+0.998\,n^{-1/4}} \\
		&= n^{1/2}\left(1-0.998\,n^{-1/4}+O\left(n^{-1/2}\right)\right)\\
		&= \left(n^{1/4}-0.499\right)^2+O(1).
	\end{align*}
	Solving this for $n$ gives
	\begin{align*}
		n &\le \left((k+O(1))^{1/2}+0.499\right)^4\\
		&= (k+O(1))^2+1.996(k+O(1))^{3/2}+O(k)\\
		&= k^2+1.996\,k^{3/2}+O(k),
	\end{align*}
	which provides the upper bound.
	
	To prove the lower bound, notice that a Sidon set in $\mathbb{Z}_{n}$ is also a Sidon set in $[1,n]\subset\Z$. Therefore, any Sidon-Ramsey $k$-partition of $\mathbb{Z}_{n}$ induces a Sidon-Ramsey $k$-partition of $[1,n]$. Singer proved that, if $q$ is a prime power, there exists a Sidon set $S$ in $\mathbb{Z}_{q^{2}+q+1}$ of size $q+1$ \cite{Sin38}. Assume that $S=\{s_{1},s_{2},\dots,s_{q+1}\}$ and consider the Sidon sets $S_{i}=S-s_{i}$. Since $|S_i|=|S|=q+1$ and $S_i\cap S_j=\{0\}$, with $i\neq j$, we have that 
	\[\big\vert\bigcup_{i=1}^{q+1}S_{i}\big\vert=(q+1)(q+1)-(q+1)+1=q^2+q+1,\]
	then $\bigcup_{i=1}^{q+1}S_{i}=\Z_{q^2+q+1}$ and so $\bigcup_{i=1}^{q+1}S_{i}$ covers $\Z_{q^2+q+1}$. Therefore, we may construct a Sidon-Ramsey $(q+1)$-partition by taking subsets of the elements of this cover.
	Let $k\in\mathbb{N}$ and let $p$ be the largest prime less than $k$. It is known that $p=k-O(k^{0.525})$ (see e.g. \cite{BHP01}). As there exists a Sidon-Ramsey $(p+1)$-partition of $[1,p^{2}+p+1]$, we conclude that $\SR(k)\ge \SR(p+1)\ge p^2+p+1= k^2-O(k^{1.525})$, which completes the proof.
\end{proof}

For $h\ge 2$ we have a result analogous to the Theorem \ref{thm:main}. We can derive an upper bound of $\BR_h(k)$ from Green's bound \cite{Gre01} and the pigeonhole principle. The lower bound comes from considering translates of the $B_{h}$-set constructed by Ruzsa (for $h=2$) \cite{Ruz93} and by G\'omez-Trujillo (for $h>2$) \cite{GT11}. Note that the construction used for the $h=2$ case here, is not the same as the one used in the proof of Theorem \ref{thm:main}, although it gives the same bound.
\begin{theorem}\label{thm:bh}
	Let $k,h$ be positive integers, $h\ge 2$, then
	\[k^{\frac{h}{h-1}}-O\left(k^{1+\frac{c}{h-1}}\right)\le \BR_h(k)\le C_hk^{\frac{h}{h-1}},\]
	where $c\le 0.525$ depends on the distribution of the prime numbers and $C_h$ depends on the best upper bound for $\Bh$ numbers.
\end{theorem}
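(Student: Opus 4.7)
The plan runs parallel to that of Theorem~\ref{thm:main}, with the Green bound~\eqref{Fhcota} playing the role of~\eqref{eq:bestS}. For the upper bound, I would invoke the pigeonhole principle directly: if $[1,n]$ admits a $B_h$ $k$-partition, every class has size at most $\Bh(n)\le c_hn^{1/h}$, so $n\le kc_hn^{1/h}$ and hence $n\le c_h^{h/(h-1)}k^{h/(h-1)}$. Setting $C_h=c_h^{h/(h-1)}$ yields the claimed upper bound.

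For the lower bound, the strategy is, for each prime $p$, to exhibit an explicit $B_h$ $p^{h-1}$-partition of a cyclic group of size $\sim p^h$, obtained from translates of a single $B_h$-set of size $\sim p$. For $h=2$, Ruzsa's construction~\cite{Ruz93} takes the Sidon set $S=\{(i,g^i\bmod p):1\le i\le p-1\}$ inside $\Z_{p-1}\times\Z_p$, where $g$ is a primitive root mod $p$; the $p$ translates $S+(0,j)$, $j\in\Z_p$, are pairwise disjoint and cover $\Z_{p-1}\times\Z_p$, yielding a $B_2$ $p$-partition of a set of size $p(p-1)$. For $h\ge 3$ I would use G\'omez-Trujillo's construction~\cite{GT11}, which produces a $B_h$-set of size $\sim p$ inside a cyclic group of size $\sim p^h$, and exploit its algebraic (exponential) structure to decompose the ambient group into $\sim p^{h-1}$ disjoint translates of the set. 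Since a subset of $\{0,\dots,N-1\}$ that is $B_h$ in $\Z_N$ is automatically $B_h$ in $\Z$, this descends to a $B_h$ partition of $[1,N]\subset\Z$.

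For an arbitrary $k$, I would take the largest prime $p$ with $p^{h-1}\le k$; the Baker--Harman--Pintz prime-gap estimate~\cite{BHP01} then gives $p=k^{1/(h-1)}-O(k^{c/(h-1)})$ with $c\le 0.525$. Since every subset of a $B_h$-set is again $B_h$, the $p^{h-1}$-partition can be refined to a $k$-partition by splitting parts, and we obtain
\[\BR_h(k)\ge N\sim p^h=k^{h/(h-1)}-O\!\left(k^{1+c/(h-1)}\right).\]
The step I expect to require the most care is the translate-partition argument for $h\ge 3$: for $h=2$ the product structure $\Z_{p-1}\times\Z_p$ makes disjointness of the translates transparent, but for general $h$ one must identify a suitable translating subgroup of the cyclic ambient group and verify (or arrange, via a Singer-style excision of small overlaps as in Theorem~\ref{thm:main}) that $\sim p^{h-1}$ translates suffice.
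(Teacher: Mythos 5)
Your upper bound is exactly the paper's argument. Your lower-bound plan is also the same in outline (graph-type $B_h$-sets plus vertical translates, then the largest prime below the relevant threshold), but the step you flag as the hardest --- decomposing the ambient group into $\sim p^{h-1}$ disjoint translates for $h\ge 3$ --- is left genuinely open in your write-up, and your proposed fixes (finding a translating subgroup of a cyclic group, or a Singer-style excision) point in the wrong direction. The resolution is that the G\'omez--Trujillo construction \cite{GT11} is \emph{already} presented as the graph of a function $f:\Z_{p}\to\Z_{p^{h-1}-1}$, so the ambient group is the product $\Z_{p}\times\Z_{p^{h-1}-1}$ (of order $p^{h}-p$), not an unstructured cyclic group, and the translates $C+(0,j)$ for $j=0,\dots,p^{h-1}-2$ partition it for exactly the same trivial reason as in your $h=2$ case: each fiber $\{t\}\times\Z_{p^{h-1}-1}$ meets each translate in exactly one point. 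This is the one observation the paper isolates (``for any $f:\Z_a\to\Z_b$, the graph and its vertical translates partition $\Z_a\times\Z_b$''), and it makes the $h\ge 3$ case no harder than $h=2$; no overlaps occur, so nothing needs to be excised.

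Two minor quantitative points to tidy up if you complete the argument this way: the number of translates is $p^{h-1}-1$ rather than $p^{h-1}$, so you should take $p$ to be the largest prime with $p^{h-1}-1\le k$ (as the paper does), which changes nothing asymptotically; and the identification of $\Z_{p}\times\Z_{p^{h-1}-1}$ with $[1,p^{h}-p]$ (via CRT or simply via the observation that a $B_h$-set in $\Z_N$ restricted to representatives in $[1,N]$ is $B_h$ in $\Z$) should be stated for the partition, not just for a single set --- which is immediate, since the bijection transports the whole partition. With those points filled in, your proof coincides with the paper's.
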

\begin{proof}
	
First, we work the upper bound for the $B_{h}-Ramsey$ numbers. The case $h=2$ follows from Theorem \ref{thm:main}. If $h>2$, the best known upper bound for $B_h$ sets, established by Green in 2001\cite{Gre01}, is
\[F_h(n)\leq c_hn^{1/h},\]
where $c_h\le \frac{1}{2e}\left(h+\left(3/2+o(1)\right)\log(h)\right)$.
From this and using the pigeonhole principle we deduce that the interval $[1,n]$ cannot be partitioned into less than 
\[k\ge\frac{n}{F_h(n)}\ge\frac{n}{c_hn^{1/h}}\]
$B_h$ sets. 
Solving this for $n$ gives
\[n \le C_hk^{\frac{h}{h-1}},\]
where $C_h=c_h^{\frac{h}{h\textcolor{red}{-}1}}$.
Which provides the upper bound in Theorem \ref{thm:bh} for $h>2$.

To prove the lower bound in Theorem \ref{thm:bh}, notice that a $B_{h}$ set in $\mathbb{Z}_{n}$ is also a $B_{h}$ set in $[1,n]\subset\mathbb{Z}$. We conclude that any $B_{h}$-Ramsey $k$-partition of $\mathbb{Z}_{n}$ induces a $B_{h}$-Ramsey $k$-partition of $[1,n]$. The main idea to get the lower bounds. In general, for any positive integers $a\leq b$ and any function $f:\mathbb{Z}_{a}\to \mathbb{Z}_{b}$, the set \[C=\{(t,f(t)):t\in \mathbb{Z}_{a}\}\] and their translations $C+(0,1)$, $C+(0,2)$, $\dots$, $C+(0,b-1)$ give a partition of $\mathbb{Z}_{a}\times \mathbb{Z}_{b}$. Ruzsa proved that for any $p$ prime number, there is a function $f:\mathbb{Z}_{p-1}\to \mathbb{Z}_{p}$, such that the set
\[\{(a,f(a)): a\in \mathbb{Z}_{p-1}\}\subseteq \mathbb{Z}_{p-1}\times \mathbb{Z}_{p}\] is a $B_{2}$-set (see \cite{Ruz93}). For $h>2$, G\'omez and Trujillo proved that for any $p$ prime number there exist a function $f:\mathbb{Z}_{p}\to \mathbb{Z}_{p^{h-1}-1}$ such that the set
\[\{(a,f(a)): a\in \mathbb{Z}_{p}\}\subseteq \mathbb{Z}_{p}\times \mathbb{Z}_{p^{h-1}-1}\] is a $B_{h}$-set (see \cite{GT11}). Then, for any prime number $p$, there is a $B_{2}$ Ramsey partition of $[1,p^{2}-p]$ in $p$ parts, and for $h>2$ there is a $B_{h}$-Ramsey partition of $[1,p^{h}-p]$ in $p^{h-1}-1$ parts.

Let $k\in \mathbb{N}$ and, for $h>2$, let $p$ be the largest prime number such that $p^{h-1}-1\le k$ (for $h=2$ we consider $p$ to be the largest prime number less than $k$). It is known that $p=(k+1)^{1/(h-1)}-O((k+1)^{0.525/(h-1)})$ (for $h=2$, we have $p=k-O(k^{0.525})$) (see \cite{Dus99}). In the case that $h>2$ we have that \begin{align*}BR_{h}(k)&\ge \BR_{h}(p^{h-1}-1)\ge p^{h}-p\\ &=((k+1)^{1/(h-1)}-O((k+1)^{0.525/(h-1)}))^h-(k+1)^{1/(h-1)}+O((k+1)^{0.525/(h-1)})\\
	&=(k+1)^{h/(h-1)}-O((k+1)^{1+0.525/(h-1)}).\end{align*}
In the case $h=2$ we have that
\[BR_{2}(k)\ge \BR_{2}(p)\ge p^{2}-p\ge (k-O(k^{0.525}))^{2}-(k-O(k^{0.525}))=k^{2}-O(k^{1.525}).\qedhere\]
\end{proof}

\section{\texorpdfstring{$B_h$}{Bh} and \texorpdfstring{$B_h$}{Bh}-Ramsey numbers in \texorpdfstring{$d$}{d}-dimensional boxes}\label{sec:3}

It is an interesting problem to study $B_h$-sets in higher dimensions. For a fixed positive integer $d$ and positive integers $n_{1}\le \dots \le n_{d}$, we seek to bound the largest cardinality $\Bh(n_{1},\dots,n_{d})$ of a $B_{h}$-set in the $d$-dimensional (not necessarily symmetric) box $$X=[1,n_{1}]\times [1,n_{2}]\times \dots \times [1,n_{d}]=\prod_{j=1}^{d}[1,n_{j}]\subset\Z^d.$$

Regarding the lower bounds, there is a natural way to map one-dimensional $B_h$-sets to $d$-dimensional  $B_h$-sets.
Any integer $0\le a\le n_{1}\cdots n_{d}-1$ has exactly one representation of the form $a_{1}+a_{2}n_{1}+a_{3}n_{1}n_{2}+\cdots +a_{d-1}n_{1}n_{2}\dots n_{d-1}$, where $0\le a_{i}< n_{i}$. Let $\varphi:\mathbb{Z}\to\mathbb{Z}^{d}$ be such that $\varphi(a)=(a_{1},a_{2},\dots ,a_{d})$. Note that any $B_h$-set in $[0,n_{1}\cdots n_{d}-1]$ gets sent into a $B_h$-set in $[0,n_{1}-1]\times \cdots \times [0,n_{d}-1]$, since $\varphi(x_{1})+\cdots +\varphi(x_{h})=\varphi(y_{1})+\cdots +\varphi(y_{h})$ implies that $x_{1}+\cdots +x_{h}=y_{1}+\cdots+y_{h}$.
Using this property of $\varphi$, we immediately have that
\begin{equation}\label{eq:lowFh}
	\Bh(n_1\cdots n_d)\le\Bh(n_1,\dots,n_d).
\end{equation}
So lower bounds in the $d$-dimensional case can be obtained from lower bounds in the one-dimensional case. This was observed for Sidon sets by Cilleruelo \cite{Cil10}.

As for the upper bounds, the first result was given by Lindström for the case $n_1=\dots=n_d=n$ and $h=2$ \cite{Lin72}, i.e. Sidon sets in symmetrical boxes in high dimensions. The bound obtained was
\begin{equation}\label{eq:Lin}
	\Sidon(n,\dots,n)\leq n^{d/2}+O\left(n^{\frac{d^{2}}{2d+2}}\right),
\end{equation}
which together with \eqref{eq:lowFh} and \eqref{Fhcota} gives
\begin{equation}\label{eq:Linboth}
n^{d/2}(1-o(1))\leq \Sidon(n,\dots,n)\leq n^{d/2}+O\left(n^{\frac{d^{2}}{2d+2}}\right).
\end{equation}

The best result obtained for asymmetrical boxes with $h=2$ is due to Cilleruelo \cite{Cil10}, he proved that
\begin{equation}\label{eq:Cil}
	\Sidon(n_1,\dots,n_d)\le N^{1/2}\left(1+O\left(\left(\frac{N_{s-1}}{N^{1/2}}\right)^{\frac{1}{d-s+2}}\right)\right),
\end{equation}
where $N_{0}=1$, $N_{i}=\prod_{j=1}^{i}n_{j}$ for $1\le i\le d$, $N=N_{d}$ and $s$ is the least integer such that $N^{1/2}\le n_{s}^{d-s+2}N_{s-1}$. This match the  lower bound obtained by \eqref{eq:lowFh} and \eqref{Fhcota}, 
\begin{equation}\label{eq:Sidonmot}
N^{1/2}(1-o(1))\leq 	\Sidon(n_1,\dots,n_d)\le N^{1/2}\left(1+O\left(\left(\frac{N_{s-1}}{N^{1/2}}\right)^{\frac{1}{d-s+2}}\right)\right).
\end{equation}

For $h\ge 2$, the best upper bound so far for symmetrical boxes is due to Rackham and {\v{S}}arka \cite{RS10} who showed that
\begin{equation}\label{eq:RS}
	\Bh(n,\dots,n)\le
	\begin{cases}
		t^{\frac{d}{h}}(t!)^{\frac{1}{t}}n^{\frac{d}{h}}+O\left(n^\frac{d^2}{h(d+1)}\right) & \text{if $h=2t$,}\\
		t^{\frac{d-1}{h}}(t!)^{\frac{2}{h}}n^{\frac{d}{h}}+O\left(n^{\frac{d^2}{h(d+1)}}\right) & \text{if $h=2t-1$,}\\
	\end{cases}
\end{equation}
which again match the  lower bound obtained from \eqref{eq:lowFh} and \eqref{Fhcota},
\begin{equation}\label{eq:Bhbound}
\left(1+o(1)\right)n^{d/h}\le	\Bh(n,\dots,n)\le
	\begin{cases}
		t^{\frac{d}{h}}(t!)^{\frac{1}{t}}n^{\frac{d}{h}}+O\left(n^\frac{d^2}{h(d+1)}\right) & \text{if $h=2t$,}\\
		t^{\frac{d-1}{h}}(t!)^{\frac{2}{h}}n^{\frac{d}{h}}+O\left(n^{\frac{d^2}{h(d+1)}}\right) & \text{if $h=2t-1$.}\\
	\end{cases}
\end{equation}
For large enough $h$,  Rackham and {\v{S}}arka  gave the improvement
\begin{equation}\label{eq:RSlarge}
	\Bh(n,\dots,n)\le
	\begin{cases}
		(\pi d)^{\frac{d}{2h}}(1+\epsilon(h))t^{\frac{d}{2h}}(t!)^{\frac{2}{h}}n^{\frac{d}{h}}+O\left(n^\frac{d^2}{h(d+1)}\right) & \text{if $h=2t$,}\\
		(\pi d)^{\frac{d}{2h}}(1+\epsilon(h))t^{\frac{d-2}{2h}}(t!)^{\frac{2}{h}}n^{\frac{d}{h}}+O\left(n^\frac{d^2}{h(d+1)}\right) & \text{if $h=2t-1$,}\\
	\end{cases}
\end{equation}
where $\epsilon(h)$ is a function that approach $0$ as $h\to \infty$.

Note that the bound \eqref{eq:Cil} by Cilleruelo in the symmetrical case and the bound \eqref{eq:RS} by Rackham and \v{S}arka for $h=2$ coincide with the bound \eqref{eq:Lin} by Lindström. In Theorem \ref{cajasfh} we provide an upper bound for $\Bh(n_1,\dots, n_d)$ in the most general case, where the box is not necessarily symmetric and $h\geq 2$. We use a mix of techniques used by Cilleruelo and Rackham and \v{S}arka. Before continuing  we need a couple of definitions and lemmas.


Given subsets $A$ and $B$ of an additive  group $G$, and $g \in G$, we define the \emph{sumset} of $A$ and $B$ as
\begin{equation*}
A+B=\{a+b:a\in A, b\in B\}, 
\end{equation*}
and the \emph{additive energy} between $A$ and $B$ as
\begin{equation*}
	\sum_{g\in G}d_{A}(g)d_{B}(g),
\end{equation*}
where \begin{equation*}
	d_{X}(g)=\lvert\{(x,x'):x,x'\in X, x-x'=g\}\rvert.
\end{equation*}

\begin{lemma}[\cite{Cil10}]\label{energiaaditiva}
	Let $G$ be an additive group and let $A,B\subset G$. Then
	\[\lvert A\rvert^{2}\le \frac{\lvert A+B\rvert}{\lvert B\rvert^{2}}\sum_{g\in G}d_{A}(g)d_{B}(g).\]
\end{lemma}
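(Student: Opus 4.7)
The plan is to apply the Cauchy–Schwarz inequality to the representation counting function of $A+B$. For each $s\in G$, set $r(s)=\lvert\{(a,b)\in A\times B:a+b=s\}\rvert$, so that $r$ is supported on $A+B$ and $\sum_{s}r(s)=\lvert A\rvert\lvert B\rvert$. Applying Cauchy–Schwarz against the indicator function of $A+B$ gives
\[\bigl(\lvert A\rvert\lvert B\rvert\bigr)^{2}=\Bigl(\sum_{s\in A+B}r(s)\Bigr)^{2}\le \lvert A+B\rvert\sum_{s\in G}r(s)^{2}.\]

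The second step is to recognize $\sum_{s}r(s)^{2}$ as the additive energy appearing in the statement. By its definition, $\sum_{s}r(s)^{2}$ counts quadruples $(a,a',b,b')\in A^{2}\times B^{2}$ with $a+b=a'+b'$, equivalently $a-a'=b'-b$. Partitioning these quadruples according to the common value $g:=a-a'=b'-b$, the number of pairs $(a,a')\in A^{2}$ with $a-a'=g$ is $d_{A}(g)$, and the number of pairs $(b,b')\in B^{2}$ with $b'-b=g$ equals $d_{B}(-g)$. Since $d_{B}$ is invariant under negation (swapping the coordinates of a defining pair negates the difference), $d_{B}(-g)=d_{B}(g)$, and therefore $\sum_{s}r(s)^{2}=\sum_{g\in G}d_{A}(g)d_{B}(g)$. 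Dividing the displayed Cauchy–Schwarz inequality by $\lvert B\rvert^{2}$ then yields the stated bound.

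The argument is entirely routine, and I do not anticipate a genuine obstacle. The only point worth underlining is the bookkeeping identity between $\sum_{s}r(s)^{2}$ and the additive energy $\sum_{g}d_{A}(g)d_{B}(g)$; once this is in place, the presence of the factor $\lvert A+B\rvert$ on the right merely reflects that $r$ is supported on $A+B$, which is exactly what Cauchy–Schwarz needs in order to pass from the $\ell^{1}$ mass of $r$ to its $\ell^{2}$ mass.
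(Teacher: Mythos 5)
Your proof is correct, and it is the standard Cauchy--Schwarz argument that Cilleruelo himself uses for this lemma (the paper only cites \cite{Cil10} and gives no proof of its own). The bookkeeping identity $\sum_{s}r(s)^{2}=\sum_{g}d_{A}(g)d_{B}(g)$, including the symmetry $d_{B}(-g)=d_{B}(g)$, is handled correctly, so there is nothing to add.
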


The following lemma is a generalization of Lemmas 4.2.1 and 4.3.1 from \cite{RS10}. The two lemmas from \cite{RS10} require that $B$ is a symmetrical box, but this fact is not actually used in the proof. We use $tA$  to denote the set of sums of $t$  (not necessary different) elements of $A$, and  $t\ast A$  to denote the set of sums of $t$ distinct elements of $A$.

\begin{lemma}\label{cajaB} 
	Let $h\ge 2$, $A$ be a $B_{h}$ set in $\mathbb{N}^{d}$ and $B=[0,i_{1}-1]\times [0,i_{2}-1]\times \cdots \times [0,i_{d}-1]$.
	\begin{enumerate}
		\item If $h=2t$ then 
		\[\sum_{z\in\Z^{d}}d_{tA}(z)d_{B}(z)\le \lvert B\rvert ^{2}+O(\lvert B\rvert \lvert A\rvert ^{h-1}).\]
		\item If $h=2t-1$ then 
		\[\sum_{z\in\Z^{d}}d_{t\ast A}(z)d_{B}(z)\le \frac{\lvert A\rvert }{t}\lvert B|^{2}+O(\lvert B|\lvert A|^{h}).\]
	\end{enumerate}
\end{lemma}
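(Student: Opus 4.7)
The plan is to interpret $\sum_z d_{tA}(z)d_B(z)$ (respectively $\sum_z d_{t\ast A}(z)d_B(z)$) as the number of 4-tuples $(u_1,u_2,v_1,v_2)$ with $u_1,u_2\in tA$ (resp.\ $t\ast A$), $v_1,v_2\in B$, satisfying $u_1+v_2=u_2+v_1$. Splitting by whether $(u_1,v_1)=(u_2,v_2)$, the diagonal contributes exactly $|tA|\cdot|B|$ (resp.\ $|t\ast A|\cdot|B|$); since $|tA|\le|A|^t\le|A|^{h-1}$ when $h=2t$, and $|t\ast A|\le\binom{|A|}{t}=O(|A|^h)$ when $h=2t-1$, this is already absorbed into the error term in both cases. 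All the work is therefore in bounding the off-diagonal contribution.

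For the off-diagonal, I would lift each $u\in tA$ (resp.\ $t\ast A$) to its unique multiset (resp.\ $t$-subset) representation $\{a_1,\dots,a_t\}\subset A$, which exists because $B_h\Rightarrow B_t$ for all $t\le h$. The collision identity then reads $\sum_{i}a_i+v_2=\sum_{i}a_i'+v_1$. The crucial step is that \emph{two} such collisions can be combined to eliminate the $B$-elements: from $(u_1,u_2,v_1,v_2)$ and $(u_3,u_4,v_1,v_2)$ with the same difference $z=u_1-u_2=u_3-u_4$, one obtains $u_1+u_4=u_2+u_3$, i.e.\ $\sum a^{(1)}+\sum a^{(4)}=\sum a^{(2)}+\sum a^{(3)}$, which is a genuine $B_{2t}$ equation in case (1). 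By the $B_{2t}=B_h$ hypothesis this forces the multiset equality $\{a^{(1)}\}\cup\{a^{(4)}\}=\{a^{(2)}\}\cup\{a^{(3)}\}$, which drastically restricts how many distinct solutions can share a given difference $z$.

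In case (1), unpacking this rigidity shows that $d_{tA}(z)$ behaves essentially like a $\{0,1\}$-valued function on the nonzero part of its support, with the total excess $\sum_{z\ne0}(d_{tA}(z)-1)_+$ controlled by the number of $B_{2t}$-configurations, which is $O(|A|^{h-1})$. Combining with $\sum_{z\ne0}d_B(z)=|B|^2-|B|$ yields $|B|^2+O(|B||A|^{h-1})$. In case (2) one classifies pairs $(u_1,u_2)\in(t\ast A)^2$ by the overlap $k=|u_1\cap u_2|$: writing $u_1=S\cup\{b\}$, $u_2=S\cup\{c\}$ with $|S|=t-1$, the maximal-overlap case $k=t-1$ contributes pairs parameterized by $(S,b,c)$, which by the Sidon property of $A$ (implied by $B_{2t-1}$) gives exactly $\binom{|A|-2}{t-1}$ pairs per valid $z\in A-A\setminus\{0\}$; after summing over $z$, combining with Lemma~\ref{energiaaditiva} applied with $A$ in place of $tA$, and using $|A-A\setminus\{0\}|\le|A|^2$, one extracts the main term $\frac{|A|}{t}|B|^2$. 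Configurations with smaller overlap $k<t-1$ are eliminated by a direct application of $B_{2t-1}$ analogous to the paragraph above (a pair of such configurations yields a $B_{2t-1}$ equation after removing one shared element from each side), showing that they contribute only $O(|B||A|^h)$.

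The main technical obstacle is the bookkeeping in case (2): keeping track of the $(t!)^2$ factors introduced by lifting to ordered $t$-tuples, maintaining disjointness conditions across the overlap classes, and showing that the sub-maximal overlap cases are genuinely absorbed into the $O(|B||A|^h)$ error rather than into the main term. A related subtlety is that the sharp constant $1$ (resp.\ $\frac{|A|}{t}$) in the main term is only attained when $B-B$ is essentially covered by $tA-tA$ (resp.\ $A-A$); the two bounds $(|A|-2)|B|^2$ and $O(|A|^h|B|)$ must be interpolated using the box structure of $B$, which provides the factorization $d_B(z)=\prod_j(i_j-|z_j|)^+$ needed to sum $d_B$ efficiently over the relevant support.
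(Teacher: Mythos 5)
First, note that the paper does not actually prove this lemma: it defers to Lemmas 4.2.1 and 4.3.1 of Rackham and \v{S}arka \cite{RS10}, remarking only that their proofs never use the symmetry of the box $B$. So your proposal must be judged against that standard argument. Your treatment of part (1) is essentially correct and matches it: for $z\neq 0$, cancelling the common part of the two multiset representations and applying $B_{2t}$ shows that the reduced pair is unique, so differences realized only by representations with disjoint supports satisfy $d_{tA}(z)\le 1$ (giving the main term $\sum_{z}d_B(z)=|B|^2$), while the pairs whose representations share an element number $O(|A|\cdot|A|^{t-1}\cdot|A|^{t-1})=O(|A|^{h-1})$ in total and are absorbed into $O(|B||A|^{h-1})$ via $d_B\le|B|$.

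In part (2), however, you have the two overlap classes playing exactly the opposite roles from the ones they must play, and this is a genuine gap. The main term $\frac{|A|}{t}|B|^2$ comes from the \emph{disjoint} pairs ($k=0$): there $u_1+u_4=u_2+u_3$ is a $2t$-fold sum identity, which $B_{2t-1}$ cannot rigidify, so uniqueness fails; instead one deletes a single element $q$ of the second block to get a $(2t-1)$-fold identity, concluding that the map $(P,Q,q)\mapsto \sigma(P)-\sigma(Q\setminus\{q\})=z+q$ is injective into $z+A$ and hence that at most $|A|/t$ disjoint pairs share a given nonzero difference; summing against $\sum_z d_B(z)=|B|^2$ gives $\frac{|A|}{t}|B|^2$. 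Your proposed cancellation for $k<t-1$ (``removing one shared element from each side'') has nothing to remove when $k=0$, so the dominant class is not handled at all — and it cannot be pushed into $O(|B||A|^h)$, since the disjoint pairs number $\Theta(|A|^{2t})=\Theta(|A|^{h+1})$ and, when $B$ is large, contribute $\Theta(|A|^{h+1}|B|)\gg |B||A|^h$. Conversely, the class you promote to the main term, $k=t-1$, gives $d(z)=\binom{|A|-2}{t-1}=\Theta(|A|^{t-1})$ per difference $z\in(A-A)\setminus\{0\}$; bounding its contribution by $\binom{|A|-2}{t-1}\sum_z d_B(z)=\Theta(|A|^{t-1})|B|^2$ exceeds $\frac{|A|}{t}|B|^2$ already by a factor $2$ for $t=2$ and by a power of $|A|$ for $t\ge 3$, which would ruin the constant in Theorem \ref{cajasfh}. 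The correct accounting is that \emph{all} pairs with overlap $k\ge 1$ together number $O(|A|^{2t-1})=O(|A|^{h})$ and are absorbed into $O(|B||A|^h)$ via $d_B\le|B|$, exactly as in part (1).
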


Using Lemma \ref{energiaaditiva} and Lemma \ref{cajaB}  together with techniques previously used by Cilleruelo \cite{Cil10} and Rackham and {\v{S}}arka \cite{RS10} we obtain the following theorem.

\begin{theorem}\label{cajasfh}
	Let $n_{1}\le n_2\le\dots\le n_{d}$ be positive integers, set $N_{0}=1$, $N_{i}=\prod_{j=1}^{i}n_{j}$ for $1\le i\le d$, $N=N_{d}$, and let $s$ be the least integer such that $N^{1/h}\le n_{s}^{d-s+2}N_{s-1}$. Then, for $h\ge 2$,
	\[
	\Bh(n_{1},\dots,n_{d})\le
	\begin{cases}
		(t!)^{\frac{2}{h}}t^{\frac{d}{h}}N^{\frac{1}{h}}\left(1+O\left(\left(\frac{N_{s-1}}{N^{\frac{1}{h}}}\right)^{\frac{1}{d-s+2}}\right)\right) & \text{if $h=2t$,}\\
		(t!)^{\frac{2}{h}}t^{\frac{d-1}{h}}N^{\frac{1}{h}}\left(1+O\left(\left(\frac{N_{s-1}}{N^{\frac{1}{h}}}\right)^{\frac{1}{d-s+2}}\right)\right) & \text{if $h=2t-1$.}\\
	\end{cases}
	\]
\end{theorem}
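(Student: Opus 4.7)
The plan is to combine Cilleruelo's trick for non-symmetric boxes \cite{Cil10} with the Rackham--Šarka idea of applying the additive energy inequality to an iterated sumset $tA$ (or $t\ast A$) rather than to $A$ itself \cite{RS10}. Let $A\subset\prod_j[1,n_j]$ be a $B_h$-set of maximum size, and let $B=[0,i_1-1]\times\cdots\times[0,i_d-1]$ with positive integers $i_1,\dots,i_d$ to be chosen later. First I would apply Lemma~\ref{energiaaditiva} with $A$ replaced by $tA$ when $h=2t$ and by $t\ast A$ when $h=2t-1$. The $B_h$ hypothesis on $A$ forces $|tA|=\binom{|A|+t-1}{t}$ and $|t\ast A|=\binom{|A|}{t}$, both at least $|A|^{t}/t!-O(|A|^{t-1})$.

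Next, I would plug in the energy bound of Lemma~\ref{cajaB} together with the trivial inclusions $tA+B,\,t\ast A+B\subset\prod_j[t,tn_j+i_j-1]$ to obtain
\[
\Bh(n_1,\dots,n_d)^{h}\;\le\;\gamma_{h,t}\prod_{j=1}^{d}(tn_j+i_j-1)\left(1+O\!\left(\frac{N^{(h-1)/h}}{\prod_j i_j}\right)\right),
\]
where $\gamma_{h,t}=(t!)^{2}$ if $h=2t$ and $\gamma_{h,t}=(t!)^{2}/t$ if $h=2t-1$; the extra factor $1/t$ in the odd case comes from the $|A|/t$ in Lemma~\ref{cajaB}(2) and is exactly what downgrades $t^{d}$ to $t^{d-1}$ in the leading constant. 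The a priori bound $|A|=O(N^{1/h})$ needed to control the error term comes from the trivial estimate $|A|^{h}\le h!\,|hA|\le h!\,h^{d}N$. I would then optimise the $i_{j}$ Cilleruelo-style: put $\alpha=(N_{s-1}/N^{1/h})^{1/(d-s+2)}$ and set $i_{j}=1$ for $j<s$, $i_{j}=\lfloor\alpha n_{j}\rfloor$ for $j\ge s$. The defining inequality for $s$ guarantees $\alpha n_{s}\ge 1$, so the choice is valid; a direct computation then gives $\prod_{j}(tn_{j}+i_{j}-1)=t^{d}N(1+O(\alpha))$ and $N^{(h-1)/h}/\prod_{j}i_{j}=O(\alpha)$, the latter being precisely the algebraic identity $\alpha^{d-s+2}=N_{s-1}/N^{1/h}$ that defines $\alpha$. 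Substituting and taking $h$-th roots then produces the two bounds in the statement.

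The main obstacle will be the balancing act in the second step: the exponent $1/(d-s+2)$ appearing in the theorem is exactly the one for which the overshoot in the main term and the energy error are of the common order $\alpha$, and the role of the index $s$ is to select the smallest prefix where this balance is consistent with $i_{j}\ge 1$. Beyond that, one has to verify that integer rounding, the boundary contributions from $tn_{j}+i_{j}-1$ versus $tn_{j}$, and the shape differences between the two parts of Lemma~\ref{cajaB} do not distort the asymptotics beyond what is absorbed into $O(\alpha)$.
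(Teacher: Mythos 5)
Your proposal follows essentially the same route as the paper's proof: apply Lemma~\ref{energiaaditiva} to $tA$ (resp.\ $t\ast A$), bound the energy via Lemma~\ref{cajaB}, use $\abs{A}=O(N^{1/h})$ from $\abs{A}^h/h!\le\abs{hX}=h^dN$, and optimise a box $B$ that is trivial in the first $s-1$ coordinates and of side $\asymp \alpha n_j$ thereafter, with $\alpha=M=(N_{s-1}/N^{1/h})^{1/(d-s+2)}$ chosen exactly as in the paper. The computations you defer (rounding, the $(1-c_t/\abs{A})^{-1}$ correction in the odd case, and the $(1+M)^{d-s+1}$ overshoot of the sumset) are precisely the ones the paper checks, and they are absorbed into $O(\alpha)$ as you claim.
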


\begin{proof}
	First, we will work the case $h=2t$. Let $A$ be a $B_{h}$ set in $X=\prod_{j=1}^{d}[1,n_{j}]\subset\Z^d$, $1\le s\le d$ and let $r_{j}=0$ for $j<s$ and $r_{j}=\lfloor tn_{j}M\rfloor$ for $j\ge s$ and some $0<M<1$ fixed. Applying Lemma \ref{energiaaditiva} with $tA$ and $B=[0,r_{1}]\times \cdots \times [0,r_{d}]$, and using Lemma \ref{cajaB} we have that
	\begin{equation}\label{eq:thm5par}\frac{\abs{tA}^{2}\abs{B}^{2}}{\abs{tA+B}}\le \sum_{z\in\Z^{d}}d_{tA}(z)d_{B}(z)\le \abs{B}^{2}+O(\abs{B}\abs{A}^{2t-1}).\end{equation}
	The number of elements in $tA$ is $\binom{\abs{A}-1+t}{t}$. A basic lower estimation to this binomial coefficient is $\frac{\abs{A}^{t}}{t!}\le \binom{\abs{A}-1+t}{t}$, then it follows that $\frac{\abs{A}^{t}}{t!}\le \abs{tA}$. Similarly, we get that $\frac{\abs{A}^{h}}{h!}\le \abs{hA}$ and $hA\subset hX$, which implies $\frac{\abs{A}^{h}}{h!}\le \abs{hX}= h^{d}N$, i.e. $\abs{A}=O(N^\frac{1}{h})$. Then we can estimate \eqref{eq:thm5par} as
	\[\frac{\abs{A}^{2t}\abs{B}^{2}}{(t!)^{2}\abs{tA+B}}\le \abs{B}^{2}+O(\abs{B}\abs{A}^{2t-1})\le \abs{B}^{2}+O(\abs{B}\abs{N}^{\frac{h-1}{h}})\]
	or equivalently
	\[\abs{A}^{2t}\le (t!)^{2}\abs{tA+B}\left(1+O\left(\frac{N^{\frac{h-1}{h}}}{\abs{B}}\right)\right).\]
	Notice that $tA+B\subset [1,tn_{1}+r_{1}]\times\cdots \times[1,tn_{d}+r_{d}]$. As $r_{j}\le tn_{j}M< r_{j}+1$ then
	\begin{align*}
		\abs{tA+B}&\le \prod_{j=1}^{d}(tn_{j}+r_{j})=t^{d}N\prod_{j=1}^{d}\left(1+\frac{r_{j}}{tn_{j}}\right)= t^{d}N\prod_{j=s}^{d}\left(1+\frac{r_{j}}{tn_{j}}\right)\\
		&\le t^{d}N(1+M)^{d-s+1}=t^{d}N(1+O(M)),
	\end{align*}
	and
	\[\abs{B}=\prod_{j=s}^{d}(1+r_{j})\ge \prod_{j=s}^{d}tn_{j}M=t^{d-s+1}\frac{N}{N_{s-1}}M^{d-s+1}.\]
	We conclude that
	\[\abs{A}^{2t}\le (t!)^{2}t^{d}N(1+O(M))\left(1+O\left(\frac{N^{\frac{h-1}{h}}N_{s-1}}{NM^{d-s+1}}\right)\right).\]
	To minimize the order of the last expression, take $M=(N_{s-1}/N^{1/h})^{1/(d-s+2)}$ and $s$ as the least integer such that $N^{1/h}\le 
	n_{s}^{d-s+2}N_{s-1}$. Then
	\[\abs{A}^{2t}\le 
	(t!)^{2}t^{d}N\left(1+O\left(\left(\frac{N_{s-1}}{N^{\frac{1}{h}}}\right)^{\frac{1}{d-s+2}}\right)\right).\]
	We conclude the proof of this case by taking the $2t^{\text{th}}$ root.	
	
	The proof in the case $h=2t-1$ is pretty similar to the proof in the case $h=2t$. Let $A$ be a $B_{h}$-set in $X$, $1\le s\le d$ and $0<M<1$. We take $M$, $r_{j}$ and $B$ as the even case proof. Applying Lemma \ref{energiaaditiva} with $t\ast A$ and $B$ and using Lemma \ref{cajaB} we have that 
	\[\frac{\abs{t\ast A}^{2}\abs{B}^{2}}{\abs{t\ast A+B}}\le \sum_{z\in\Z^{d}}d_{t\ast A}(z)d_{B}(z)\le \frac{\abs{A}}{t}\abs{B}^{2}+O(\abs{B}\abs{A}^{2t-1}).\]
	The number of elements in $t\ast A$ is $\binom{A}{t}$. A basic lower estimation of the binomial coefficient is
	$\frac{\abs{A}^{t}}{t!}(1-\frac{c_{t}}{\abs{A}})\le \abs{t\ast A}$ where $c_{t}$ depends only on $t$. Using that, and the facts that $\abs{B}=O(N)$ and $\abs{A}=\Omega(N^{\frac{1}{h}})$ (this last follows from \eqref{Fhcota} and \eqref{eq:lowFh}, and from $\frac{\abs{A}^{h}}{h!}\le h^{d}N$), we have that
	\begin{align*}
		\frac{\abs{A}^{2t}\left(1-\frac{c_{t}}{\abs{A}}\right)^{2}\abs{B}^{2}}{(t!)^{2}\abs{t\ast A+B}}\le \frac{\abs{A}}{t}\left(\abs{B}^{2}+O(\abs{B}\abs{A}^{2t-2})\right),
	\end{align*}
	or equivalently
	\begin{align*}
		\abs{A}^{2t}\le& \frac{(t!)^{2}\abs{A}}{t}\abs{t\ast A+B}\left(1+O\left(\frac{\abs{A}^{2t-2}}{\abs{B}}\right)\right)\left(\frac{1}{1-\frac{c_{t}}{\abs{A}}}\right)\\
		\le& \frac{(t!)^{2}\abs{A}}{t}\abs{t\ast A+B}\left(1+O\left(\frac{\abs{A}^{2t-2}}{\abs{B}}\right)\right)\left(1+O\left(\frac{1}{\abs{A}}\right)\right)\\
		\le& \frac{(t!)^{2}\abs{A}}{t}\abs{t\ast A+B}\left(1+O\left(\frac{N^{\frac{h-1}{h}}}{\abs{B}}\right)\right)
	\end{align*}

	Similarly as in the even case, we get that $\abs{B}\ge t^{d-s+1}\frac{N}{N_{s-1}}M^{d-s+1}$ and $\abs{t\ast A+B}\le t^{d}N{(1+O(M))}$. Then
	\[\abs{A}^{2t-1}\le \frac{(t!)^{2}}{t}t^{d}N(1+O(M))\left(1+O\left(\frac{N^{\frac{2t-2}{2t-1}}N_{s-1}}{NM^{d-s+1}}\right)\right).\]
	
	In order to minimize the last expression we take $M=(N_{s-1}/N^{1/h})^{1/(d-s+2)}$ and $s$ as the least integer such that $N^{1/h}\le n_{s}^{d-s+2}N_{s-1}$. Then 
	\[\abs{A}^{2t-1}\le (t!)^{2}t^{d-1}N\left(1+O\left(\left(\frac{N_{s-1}}{N^{\frac{1}{h}}}\right)^{\frac{1}{d-s+2}}\right)\right).\]
	We conclude the proof taking the $(2t-1)^{\text{th}}$ root.
\end{proof}

When $h$ is large enough, it is sometimes possible to improve Theorem \ref{cajasfh}. If the box $X=\prod_{j=1}^{d}[1,n_{j}]$ is sufficiently symmetric, we may consider a slightly larger symmetrical box $X'$ that contains $X$, then it is possible to use \eqref{eq:RSlarge} instead of \eqref{eq:RS} to obtain the improvement.

For the Ramsey version it is convenient to bound the size of the partition in terms of the dimensions of the box.
Let $\sr_h(n_1,\dots,n_d)$ be the largest positive integer $k$ such that there is no $B_h$ $k$-partition of $\prod_{i\le d}[1,n_i]$.
In dimension $d=1$, $\sr_h(n)$ is the counterpart of $\BR_h(k)$ in the sense that $\sr_h(\BR_h(k))=k$.

The second result of this section is a lower and upper bound for $\sr_h(n_1,\dots, n_d)$. 
\begin{theorem}\label{cajasbrh}
	Given positive integers $n_{1}\le n_2\le\dots\le n_{d}$, let $N_0=1$, $N_i=\prod_{j\le i}n_j$ for $1\leq i\leq d$, $N=N_d$, and let $s$ be the least index such that $N^{1/h}\le n_s^{d-s+2}N_{s-1}$. Define
	\begin{equation*}
		\gamma_h(n_{1},\dots,n_{d}) = \begin{cases} \frac{N^{\frac{h-1}{h}}}{\left(t!\right)^{\frac{2}{h}}t^{\frac{d}{h}}}\left(1-O\left(\left(\frac{N_{s-1}}{N^{\frac{1}{h}}}\right)^{\frac{1}{d-s+2}}\right)\right)&\text{if }h=2t,\\
			\frac{N^{\frac{h-1}{h}}}{\left(t!\right)^{\frac{2}{h}}t^{\frac{d-1}{h}}}\left(1-O\left(\left(\frac{N_{s-1}}{N^{\frac{1}{h}}}\right)^{\frac{1}{d-s+2}}\right)\right)&\text{if }h=2t-1,\end{cases}
	\end{equation*}
	then
	\[\gamma_h(n_{1},\dots,n_{d}) \le \sr_h(n_{1},\dots,n_{d}) \le N^{\frac{h-1}{h}}\left(1+O\left(N\right)^{\frac{-1+c}{h}}\right),\]
	where $c\le 0.525$ depends on the distribution of the prime numbers.
\end{theorem}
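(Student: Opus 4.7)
The plan is to prove the two inequalities by separate arguments, both leveraging results already established in the paper: the lower bound follows from a pigeonhole estimate fed by Theorem~\ref{cajasfh}, while the upper bound is obtained by transporting a one-dimensional $B_h$-partition (whose existence is guaranteed by Theorem~\ref{thm:bh}) into the $d$-dimensional box via the base-representation bijection $\varphi$ introduced right after \eqref{eq:lowFh}.

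For the lower bound, suppose $\prod_{i=1}^{d}[1,n_{i}]=A_{1}\sqcup\dots\sqcup A_{k}$ is a $B_{h}$ $k$-partition. Each $A_{j}$ is then a $B_{h}$-set contained in the box, so $|A_{j}|\le \Bh(n_{1},\dots,n_{d})$, whence summing gives $N\le k\cdot \Bh(n_{1},\dots,n_{d})$, i.e. $k\ge N/\Bh(n_{1},\dots,n_{d})$. Substituting the upper bound from Theorem~\ref{cajasfh} and expanding $(1+\varepsilon)^{-1}=1-O(\varepsilon)$ yields exactly the stated lower bound $k\ge\gamma_{h}(n_{1},\dots,n_{d})$ in both parities of $h$, so no $B_{h}$ $k$-partition of the box exists for $k<\gamma_{h}$ and therefore $\sr_{h}(n_{1},\dots,n_{d})\ge\gamma_{h}(n_{1},\dots,n_{d})$.

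For the upper bound, recall that $\varphi$ sends $B_{h}$-subsets of $[0,N-1]$ bijectively onto $B_{h}$-subsets of $\prod_{j=1}^{d}[0,n_{j}-1]$. Consequently, any $B_{h}$ $k$-partition of $[1,N]$ (after the trivial shift) transports to a $B_{h}$ $k$-partition of the box. Let $k$ be the smallest positive integer with $\BR_{h}(k)>N$; since $[1,N]\subseteq [1,\BR_{h}(k)-1]$, restricting a $B_{h}$ $k$-partition of the larger interval to $[1,N]$ (and, if needed, refining a nonempty part by splitting off singletons so that exactly $k$ nonempty parts remain) provides a $B_{h}$ $k$-partition of $[1,N]$, and hence of the box via $\varphi$. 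This gives $\sr_{h}(n_{1},\dots,n_{d})\le k-1$. To turn this into the stated bound, I would invert the estimate $\BR_{h}(k)\ge k^{h/(h-1)}-O\!\left(k^{1+c/(h-1)}\right)$ from Theorem~\ref{thm:bh}: setting $k=N^{(h-1)/h}(1+\delta)$ and expanding $(1+\delta)^{h/(h-1)}=1+\tfrac{h}{h-1}\delta+O(\delta^{2})$, a short computation shows that $\BR_{h}(k)>N$ holds as soon as $\delta\gtrsim N^{(c-1)/h}$, which yields $\sr_{h}(n_{1},\dots,n_{d})\le N^{(h-1)/h}\bigl(1+O\bigl(N^{(c-1)/h}\bigr)\bigr)$, as claimed.

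The principal obstacle is purely computational: both reductions are immediate from previous results, so what remains is the careful asymptotic bookkeeping needed to check that the leading constants $(t!)^{2/h}t^{d/h}$ and $(t!)^{2/h}t^{(d-1)/h}$ appearing in $\gamma_{h}$ match what Theorem~\ref{cajasfh} supplies, that the error exponent $1/(d-s+2)$ propagates cleanly through $1/(1+\varepsilon)=1-O(\varepsilon)$, and that the inversion of $\BR_{h}(k)\gtrsim k^{h/(h-1)}-k^{1+c/(h-1)}$ produces precisely the error $O(N^{(c-1)/h})$. No substantively new idea beyond Theorems~\ref{thm:bh} and \ref{cajasfh} is required.
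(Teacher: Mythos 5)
Your proposal is correct and follows essentially the same route as the paper: the lower bound via the pigeonhole principle combined with Theorem~\ref{cajasfh}, and the upper bound by transporting a $B_h$ $k$-partition of $[1,N]$ (guaranteed for $k$ slightly above $N^{(h-1)/h}$ by the lower bound of Theorem~\ref{thm:bh}) into the box via the map $\varphi$. The paper only sketches these two steps in two sentences; your version supplies the same argument with the asymptotic inversion worked out explicitly.
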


\begin{proof}
	The lower bound is obtained by using the pigeonhole principle with the bounds given in Theorem \ref{cajasfh}. The upper bound follows from the lower bound in Theorem \ref{thm:bh} and the fact that any $B_{h}$ $k$-partition in $[1,N]$ can be mapped to a $B_{h}$ $k$-partition in $[1,n_{1}]\times \cdots\times [1,n_{d}]$. 
\end{proof}

Therefore, the $B_h$-Ramsey number $\sr_h(n_{1},\dots,n_{d})$ behaves asymptotically as $N^{(h-1)/h}$.

Note that the lower bound in Theorem \ref{cajasbrh} depends on the best upper bound for $F_h$. In particular, for large enough $h$, it is sometimes possible to use \eqref{eq:RSlarge} instead of \eqref{eq:RS} to obtain a better lower bound for $\sr_h$. 
To finish the paper we evaluate some particular cases of Theorem \ref{cajasbrh}. When $h=2$ and the box is symmetric,
\begin{equation}\label{Teo6-Sim-h=2}
n^{d/2}-O\left(n^{\frac{d^2}{2d+2}}\right)\le\sr_2(n,\dots,n)\le n^{d/2}+O\left(n^{dc/2}\right).
\end{equation}
When $h=2$ and the box is not necessarily symmetric,
\begin{equation}\label{Teo6 final}
N^{1/2}\left(1-O\left(\frac{N_{s-1}}{N^{1/2}}\right)^{\frac{1}{d-s+2}} \right)\le\sr_2(n_1,\dots,n_d)\le N^{1/2}+O\left(N^{c/2}\right).
\end{equation}
 Finally, when $h\ge 3$ and the box is symmetric
	\begin{equation*}
		\gamma_h(n,\dots,n) = \begin{cases} \frac{n^{d\frac{(h-1)}{h}}-O\left(n^{\frac{d(h-1)}{h}-\frac{d}{h(d+1)}}\right)}{\left(t!\right)^{\frac{2}{h}}t^{\frac{d}{h}}}&\text{if }h=2t,\\
			\frac{n^{d\frac{(h-1)}{h}}-O\left(n^{\frac{d(h-1)}{h}-\frac{d}{h(d+1)}}\right)}{\left(t!\right)^{\frac{2}{h}}t^{\frac{d-1}{h}}}&\text{if }h=2t-1,\end{cases}
	\end{equation*}
and then,
\begin{equation}\label{teo6-sim}
\gamma_h(n,\dots,n)\le \sr_h(n,\dots,n)\le n^{\frac{d(h-1)}{h}}+O\left(n^{\frac{d(c+h-2)}{h}}\right).
\end{equation}


\bibliographystyle{amsalpha}

\bibliography{elbueno3107}

\end{document}